\newtheorem{thm}{Theorem}[section]
\newtheorem{lem}{Lemma}[section]
\newtheorem{cor}[thm]{Corollary}
\newtheorem{rem}[thm]{Remark}
\newtheorem{defin}{Definition}[section]
\newtheorem{pr}{Proposition}[section]
\date{}
\begin{document}

\begin{center}
\Large{\bf Weighted Ostrowski-Gr\"{u}ss type inequalities
}\vspace{0.5cm}

 \Large{Ana Maria Acu, Heiner Gonska}
\end{center}\vspace{1.1cm}

\thispagestyle{plain}

%\maketitle

%\begin{abstract}
%%%%% ABSTRACT %%%%%

\noindent{\bf Abstract.} {\small Several inequalities of
Ostrowski-Gr\"{u}ss-type availabe in the literature are generalized
consi-\linebreak dering the weighted case of them. Involving the least concave
majorant of the modulus of continuity we provide  upper bounds of
our inequalities.}

%\end{abstract}

\begin{center}
{\bf 2000 Mathematics Subject Classification:} 26D15, 26A15, 39B62.

{\bf Key words and phrases:} Ostrowki-Gr\"{u}ss type inequalities,
modulus of smoothness, least concave majorant of modulus of
continuity.
\end{center}
\vspace*{0.3cm}

\section{Introduction}
Over the last decades,  integral inequalities have attracted much
attention because of their applications in statistical analysis and
the theory of distributions. In this paper we improve the classical
Ostrowski type inequality for weighted integrals and generalize some
Ostrowski-Gr\"{u}ss type inequalities involving differentiable
mappings. Also, applications to special weight functions are
investigated.

 For each $x\in[a,b]$ consider the linear
functional
 $${\mathcal{L}}(f)(x):=\displaystyle{f(x)-\frac{1}{b-a}\int_{a}^bf(t)dt-\frac{f(b)-f(a)}{b-a}\left(x-\frac{a+b}{2}\right)}, f\in C[a,b].$$

 If $f:[a,b]\to \mathbb{R}$  is  differentiable with bounded derivative, then
\begin{eqnarray}
\left|{ \mathcal{L}}(f)(x)\right|&\leq&
\displaystyle\frac{1}{8}(b-a)(\Gamma-\gamma)\label{ec1.1}\\
&\leq&
\displaystyle\frac{1}{4\sqrt{3}}(b-a)(\Gamma-\gamma)\label{ec1.2}\\
&\leq& \displaystyle\frac{1}{4}(b-a)(\Gamma-\gamma),\label{ec1.3}
\end{eqnarray}
where  $\gamma:=\inf\{ f^{\prime}(x)| x\in[a,b]\}$ and  $\Gamma:=\sup\{ f^{\prime}(x)| x\in[a,b]\}$ .

 The inequality (\ref{ec1.3}) was proven by
S.S. Dragomir and S. Wang \cite{5} and it is known as the
Ostrowski-Gr\"{u}ss-type inequality. This inequality was
improved by M. Mati\'{c} et al. \cite{10}, and we recall their
result in (\ref{ec1.2}). An improvement of this result was given by
X.L. Cheng in \cite{4}, as shown in relation (\ref{ec1.1}). He also
proved that the constant $1/8$ is best possible.

In \cite{GRR} the authors introduced the linear functional
${\mathcal{L}}_{c}:C[a,b]\to \mathbb{R}$ defined by
 $${\mathcal{L}}_c(f)(x):=\displaystyle{f(x)-\frac{1}{b-a}\int_{a}^bf(t)dt-c\cdot\frac{f(b)-f(a)}{b-a}\left(x-\frac{a+b}{2}\right)},\textrm{ where } c\geq 0.$$
 The following result gives  bounds of the functional
 ${\mathcal{L}}_c$ involving differences of upper and lower bounds
 of first order derivatives.
 \begin{thm} \cite{GRR} For all $x\in [a,b]$, $c\in[0,2]$ and $f\in
 C^1[a,b]$ we have
 \begin{equation}\label{e33}
 \displaystyle\frac{1}{2(b-a)}\left[(x-a-u_{c}(x))^2\gamma-(x-b-u_{c}(x))^2\Gamma\right]\leq {\mathcal{L}}_c(f)(x)
 \end{equation}
 $$\leq\displaystyle\frac{1}{2(b-a)}\left[(x-a-u_{c}(x))^2\Gamma-(x-b-u_{c}(x))^2\gamma\right], $$
 where $ u_{c}(x):=c\left(x-\displaystyle\frac{a+b}{2}\right)$.
 \end{thm}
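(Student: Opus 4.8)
The plan is to rewrite $\mathcal{L}_c(f)(x)$ as a single integral of $f'$ against a Peano-type kernel and then to estimate that integral using $\gamma\le f'\le\Gamma$. First I would invoke the Montgomery identity: for $f\in C^1[a,b]$,
$$f(x)=\frac{1}{b-a}\int_a^b f(t)\,dt+\frac{1}{b-a}\int_a^b p(x,t)f'(t)\,dt,$$
where $p(x,t)=t-a$ for $t\in[a,x]$ and $p(x,t)=t-b$ for $t\in(x,b]$; this follows by integrating by parts separately on $[a,x]$ and on $(x,b]$. Since moreover $f(b)-f(a)=\int_a^b f'(t)\,dt$, subtracting $\frac{u_c(x)}{b-a}(f(b)-f(a))$ from the identity yields
$$\mathcal{L}_c(f)(x)=\frac{1}{b-a}\int_a^b K(x,t)f'(t)\,dt,\qquad K(x,t):=p(x,t)-u_c(x).$$

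The next step is to analyse the sign of $t\mapsto K(x,t)$ and to compute the integrals of its positive and negative parts $K^{\pm}:=\max(\pm K,0)$. Put $A:=x-a-u_c(x)$ and $B:=x-b-u_c(x)$. Using $c\in[0,2]$ and $u_c(x)=c\bigl(x-\frac{a+b}{2}\bigr)$ one checks that $A\ge0$ and $B\le0$: if $x\ge\frac{a+b}{2}$ then $0\le u_c(x)\le 2\bigl(x-\frac{a+b}{2}\bigr)$, while if $x\le\frac{a+b}{2}$ then $2\bigl(x-\frac{a+b}{2}\bigr)\le u_c(x)\le0$, and in each case the two inequalities follow. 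On $[a,x]$ the kernel equals $t-(a+u_c(x))$ and on $(x,b]$ it equals $t-(b+u_c(x))$, both increasing with slope $1$; the inequalities $A\ge0$, $B\le0$ say precisely that the node $a+u_c(x)$ lies in $[a,x]$ when $u_c(x)\ge0$ (otherwise $K>0$ throughout $[a,x]$) and that the node $b+u_c(x)$ lies in $[x,b]$ when $u_c(x)\le0$ (otherwise $K<0$ throughout $(x,b]$). In either case, splitting at the relevant node and integrating the linear pieces gives
$$\int_a^b K^+(x,t)\,dt=\frac{A^2}{2},\qquad \int_a^b K^-(x,t)\,dt=\frac{B^2}{2}.$$

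Finally, on the set where $K(x,\cdot)\ge0$ we have $\gamma K\le Kf'\le\Gamma K$, and on the set where $K(x,\cdot)<0$ these inequalities reverse, so
$$\gamma\!\int_a^b\! K^+(x,t)\,dt-\Gamma\!\int_a^b\! K^-(x,t)\,dt\le\int_a^b\! K(x,t)f'(t)\,dt\le\Gamma\!\int_a^b\! K^+(x,t)\,dt-\gamma\!\int_a^b\! K^-(x,t)\,dt.$$
Dividing by $b-a$ and inserting the values $A^2/2$ and $B^2/2$ computed above turns this into exactly the two-sided estimate (\ref{e33}). The only genuinely delicate point is the sign analysis of $K(x,\cdot)$ — establishing $A\ge0$, $B\le0$ and locating the sign change according to the sign of $u_c(x)$ — after which the evaluation of $\int K^{\pm}$ and the resulting bounds are routine.
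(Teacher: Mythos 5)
Your proof is correct. The paper does not actually prove this theorem itself (it is quoted from \cite{GRR}), but your argument --- writing $\mathcal{L}_c(f)(x)$ as an integral of $f'$ against the shifted Peano kernel $p(x,t)-u_c(x)$, checking $A\ge 0$, $B\le 0$ for $c\in[0,2]$, locating the sign changes, and bounding the positive and negative parts by $\Gamma$ and $\gamma$ --- is exactly the method the paper uses to prove its weighted generalization (Theorem \ref{t2.1}), of which this statement is the special case $w\equiv 1$.
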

 \begin{rem} a) From (\ref{e33}), with $c=1$,
 inequality (\ref{ec1.1}) follows which was established  by
X.L. Cheng in \cite{4}.

b) As a consequence of (\ref{e33}), for $c=0$, the following
inequality holds:
$$ \displaystyle -\frac{(x-a)^2+(b-x)^2}{2(b-a)}\| f^{\prime}\|_{\infty}\leq \frac{(x-a)^2\gamma-(b-x)^2\Gamma}{2(b-a)}
\leq f(x)-\frac{1}{b-a}\int_{a}^b f(t)dt $$
$$ \leq \displaystyle\frac{(x-a)^2\Gamma-(b-x)^2\gamma}{2(b-a)}\leq\frac{(x-a)^2+(b-x)^2}{2(b-a)}\| f^{\prime}\|_{\infty}. $$
This inequality improves the classical Ostrowski inequality
presented by Anastassiou in \cite{3} in the form
$$ \left| f(x)-\frac{1}{b-a}\displaystyle\int_{a}^b f(t)dt\right|\leq \displaystyle\frac{(x-a)^2+(b-x)^2}{2(b-a)}\| f^{\prime}\|_{\infty}. $$
 \end{rem}
 Weighted versions of (\ref{ec1.3}), (\ref{ec1.2}) and
 (\ref{ec1.1}) were established by J. Roumeliotis  in \cite{7} and
 \cite{R}. These results are given below.

 \begin{defin}Let $w:(a,b)\to (0,\infty)$ be integrable, i.e.,
$\displaystyle\int_{a}^{b}w(t)dt<\infty$, then
$m(\alpha,\beta):=\displaystyle\int_{\alpha}^{\beta}w(t)dt$ and  $
M(\alpha,\beta):=\displaystyle\int_{\alpha}^{\beta}tw(t)dt  $
are the first moments, for $[\alpha,\beta]\subseteq [a,b]$. Define
the mean of the interval $[\alpha,\beta]$ with respect to the weight
function $w$ as
$\sigma(\alpha,\beta):=\displaystyle{\frac{M(\alpha,\beta)}{m(\alpha,\beta)}}
$.
\end{defin}
The weighted variant of the functional ${ \mathcal{L}}$ can be
written in the following way:
$$ \mathcal{L}_{w}(f)(x):=f(x)-\displaystyle\frac{1}{m(a,b)}\int_{a}^bf(t)w(t)dt-\displaystyle\frac{f(b)-f(a)}{b-a}\left(x-\sigma(a,b)\right).$$
\begin{thm}\cite{7} Let $f\in[a,b]\to \mathbb{R}$ be
differentiable with bounded derivative and let\linebreak  $w:(a,b)\to (0,\infty)$ be
integrable. Then
\begin{align*}\left|\mathcal{L}_{w}(f)(x) \right|&\leq
\displaystyle\frac{1}{2}(\Gamma-\gamma)\frac{\sqrt{b-a}}{m(a,b)}\left\{\int_a^b
K^2(x,t)dt-\frac{m(a,b)^2(x-\sigma(a,b))^2}{b-a}\right\}^{\frac{1}{2}}\\
&\leq \displaystyle\frac{1}{4}(\Gamma-\gamma)(b-a),
\end{align*}
for all $x\in[a,b]$, where $K(x,t)=\left\{\begin{array}{l}
\int_{a}^tw(u)du, a\leq t\leq x\\
\vspace{-0.4cm}\\
 \int_b^tw(u)du, x<t\leq b.\end{array}\right.$
\end{thm}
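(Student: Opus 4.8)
\medskip

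\noindent\textit{Proof proposal.} The plan is to represent $\mathcal{L}_{w}(f)(x)$ through a Peano kernel and then estimate it by a Cauchy--Schwarz argument of Gr\"{u}ss type. First I would integrate by parts on each of the intervals $[a,x]$ and $[x,b]$. Putting $U(t)=\int_a^t w(u)\,du$ and $V(t)=\int_b^t w(u)\,du$, so that $U'=V'=w$, $U(a)=V(b)=0$ and $U(x)-V(x)=\int_a^b w(u)\,du=m(a,b)$, this yields
$$\int_a^b K(x,t)f'(t)\,dt=\bigl(U(x)-V(x)\bigr)f(x)-\int_a^b w(t)f(t)\,dt=m(a,b)f(x)-\int_a^b w(t)f(t)\,dt.$$
Since moreover $f(b)-f(a)=\int_a^b f'(t)\,dt$, the functional takes the form
$$\mathcal{L}_{w}(f)(x)=\int_a^b \Phi(x,t)f'(t)\,dt,\qquad \Phi(x,t):=\frac{K(x,t)}{m(a,b)}-\frac{x-\sigma(a,b)}{b-a}.$$

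Next I would compute the mean of the kernel. Integrating $K(x,\cdot)$ over $[a,b]$, once more by parts on the two pieces and using the moments from the definition, gives $\int_a^b K(x,t)\,dt=x\,m(a,b)-M(a,b)=m(a,b)\bigl(x-\sigma(a,b)\bigr)$, whence $\int_a^b \Phi(x,t)\,dt=0$. Consequently $f'(t)$ may be replaced by $f'(t)-\tfrac{\gamma+\Gamma}{2}$ in the representation above, and since $\bigl|f'(t)-\tfrac{\gamma+\Gamma}{2}\bigr|\le\tfrac{\Gamma-\gamma}{2}$, Cauchy--Schwarz gives
$$|\mathcal{L}_{w}(f)(x)|\le\Bigl(\int_a^b\Phi(x,t)^2\,dt\Bigr)^{1/2}\Bigl(\int_a^b\bigl(f'(t)-\tfrac{\gamma+\Gamma}{2}\bigr)^2dt\Bigr)^{1/2}\le\frac{\Gamma-\gamma}{2}\sqrt{b-a}\,\Bigl(\int_a^b\Phi(x,t)^2\,dt\Bigr)^{1/2}.$$
Expanding the square and using $\int_a^b K(x,t)\,dt=m(a,b)(x-\sigma(a,b))$ once more, the cross term and the last term collapse to $-(x-\sigma(a,b))^2/(b-a)$, so that $\int_a^b\Phi(x,t)^2\,dt=\frac{1}{m(a,b)^2}\bigl(\int_a^b K(x,t)^2\,dt-\frac{m(a,b)^2(x-\sigma(a,b))^2}{b-a}\bigr)$, which is exactly the first claimed bound.

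For the second inequality, write $\int_a^b\Phi(x,t)^2\,dt=\int_a^b\bigl(g(t)-\bar g\bigr)^2dt$ with $g(t):=K(x,t)/m(a,b)$ and $\bar g:=\frac{1}{b-a}\int_a^b g$. Because $w>0$, for $t\le x$ one has $0\le\int_a^t w\le m(a,x)$ and for $t>x$ one has $-m(x,b)\le\int_b^t w\le 0$, so $g$ stays in an interval of length $(m(a,x)+m(x,b))/m(a,b)=1$. The elementary bound $\frac{1}{b-a}\int_a^b(h-\bar h)^2\le\frac14\bigl(\sup h-\inf h\bigr)^2$, which follows from $\int_a^b(h-\bar h)^2\le\int_a^b\bigl(h-\tfrac{\sup h+\inf h}{2}\bigr)^2\le\tfrac14(\sup h-\inf h)^2(b-a)$, then gives $\int_a^b\Phi(x,t)^2\,dt\le\frac{b-a}{4}$, and substituting this into the previous estimate yields $|\mathcal{L}_{w}(f)(x)|\le\frac14(\Gamma-\gamma)(b-a)$.

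I expect the only delicate point to be this last step: one cannot simply discard the nonnegative term $m(a,b)^2(x-\sigma(a,b))^2/(b-a)$, since already in the unweighted case $\int_a^b K(x,t)^2\,dt$ alone exceeds $m(a,b)^2(b-a)/4$; the bound must instead come from controlling the \emph{oscillation} of the normalized kernel $g$. The integration-by-parts bookkeeping in the first two steps (keeping track of $m$, $M$ and $\sigma$ on the two subintervals) is the other place where care is needed, but it is routine.
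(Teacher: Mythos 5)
Your argument is correct: the Peano-kernel representation $\mathcal{L}_w(f)(x)=\int_a^b\Phi(x,t)f'(t)\,dt$ with $\int_a^b\Phi(x,t)\,dt=0$, the shift by $\tfrac{\gamma+\Gamma}{2}$ followed by Cauchy--Schwarz, and the variance bound $\tfrac{1}{b-a}\int_a^b(g-\bar g)^2\le\tfrac14(\sup g-\inf g)^2$ with $\sup g-\inf g\le 1$ all check out, and you rightly note that one cannot simply discard the $(x-\sigma)^2$ term. The paper itself states this theorem as a citation of Roumeliotis and gives no proof, so there is nothing to compare against; your route is the standard pre-Gr\"uss/Cauchy--Schwarz argument one would expect in the source.
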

\begin{thm}\cite{R} Let $f\in[a,b]\to \mathbb{R}$ be
differentiable with bounded derivative and let\linebreak  $w:(a,b)\to (0,\infty)$ be
integrable. Then
\begin{equation}\label{e34}
\left| \mathcal{L}_{w}(f)(x) \right|\leq
\displaystyle\frac{\Gamma-\gamma}{m(a,b)}\int_{x}^{t^*}(t-x)w(t)dt,
\end{equation}
 for all $x\in[a,b]$, where $t^*\in[a,b]$ is unique and verifies
 $$ \displaystyle\frac{m(a,b)}{b-a}|x-\sigma(a,b)|=\left\{\begin{array}{l}m(t^*,b), a\leq x\leq \sigma(a,b)\\
 \vspace{-0.4cm}\\
 m(a,t^*),\sigma(a,b)<x\leq b. \end{array}\right. $$
\end{thm}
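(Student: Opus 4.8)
The plan is to reduce the problem to the Peano‐kernel representation of $\mathcal{L}_w(f)(x)$ and then optimise the bound over a free constant, exactly as in the classical Ostrowski–Grüss arguments. First I would integrate by parts to write
\begin{equation*}
\mathcal{L}_w(f)(x)=\frac{1}{m(a,b)}\int_a^b K(x,t)\,f'(t)\,dt,
\end{equation*}
where $K(x,t)$ is the kernel displayed in Theorem 1.3 (one checks directly that $\int_a^b K(x,t)\,dt$ produces precisely the term $-\frac{f(b)-f(a)}{b-a}(x-\sigma(a,b))\,m(a,b)$ after accounting for $\int_a^b w$, and the remaining pieces give $f(x)-\frac{1}{m(a,b)}\int_a^b fw$). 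The key observation is that for any constant $C$ one has $\int_a^b K(x,t)\,dt = m(a,b)\,\sigma(a,b)$-type expression that is fixed, so $\mathcal{L}_w(f)(x)=\frac{1}{m(a,b)}\int_a^b K(x,t)\bigl(f'(t)-C\bigr)\,dt$ whenever $\int_a^b K(x,t)\,dt=0$; here, however, $\int_a^b K(x,t)\,dt$ is \emph{not} zero in general, which is the structural difference from the unweighted case, so instead I would keep $C$ and bound $|f'(t)-C|\le \frac{\Gamma-\gamma}{2}$ with the optimal choice $C=\frac{\Gamma+\gamma}{2}$, giving
\begin{equation*}
\bigl|\mathcal{L}_w(f)(x)\bigr|\le \frac{\Gamma-\gamma}{2\,m(a,b)}\int_a^b\bigl|K(x,t)\bigr|\,dt + \frac{1}{m(a,b)}\Bigl|C\int_a^b K(x,t)\,dt\Bigr|.
\end{equation*}
The second term must be handled by noting that the $C$-independent combination is what actually appears; more carefully, one writes $K(x,t)=\tilde K(x,t)+(\text{something integrating to zero})$ and absorbs the linear term, which is where the cutpoint $t^*$ enters.

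The heart of the proof is the choice of $t^*$. I expect the argument to run: the map $t\mapsto \int_a^t w$ (resp.\ $t\mapsto\int_t^b w$) is continuous and strictly monotone on $[a,b]$ because $w>0$, hence the defining equation
\begin{equation*}
\frac{m(a,b)}{b-a}\,|x-\sigma(a,b)| = m(t^*,b)\quad\text{or}\quad m(a,t^*)
\end{equation*}
has a unique solution $t^*$, provided the left-hand side does not exceed $m(a,b)$; I would verify that bound separately (it follows from $|x-\sigma(a,b)|\le b-a$ and $\sigma(a,b)\in(a,b)$, after a short estimate). This particular $t^*$ is engineered so that when one splits $\int_a^b K(x,t)\,f'(t)\,dt$ and replaces $f'$ by the worst-case piecewise-constant function equal to $\gamma$ on part of $[a,b]$ and $\Gamma$ on the rest, with the switch occurring at $t^*$, the linear term $\frac{f(b)-f(a)}{b-a}(x-\sigma(a,b))$ is reproduced exactly and the leftover reduces to $\pm\frac{\Gamma-\gamma}{m(a,b)}\int_x^{t^*}(t-x)w(t)\,dt$. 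Concretely, I would argue that for the admissible class of derivatives the functional $\mathcal{L}_w(f)(x)$ is largest (in absolute value) at such an extremal $f'$, by a rearrangement/bathtub-type argument on the kernel $K(x,t)$ combined with the constraint $f(b)-f(a)=\int_a^b f'$.

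The key steps, in order: (1) establish the integration-by-parts identity $\mathcal{L}_w(f)(x)=\frac{1}{m(a,b)}\int_a^b K(x,t)f'(t)\,dt$; (2) record $\int_a^b K(x,t)\,dt$ and rewrite so that the linear correction term is visible as a constraint on $\int_a^b f'$; (3) prove existence and uniqueness of $t^*$ from strict monotonicity of the moment function, checking the solvability condition on $|x-\sigma(a,b)|$; (4) show the extremal $f'$ is the two-valued step function switching at $t^*$, and evaluate $\mathcal{L}_w(f)(x)$ for it, obtaining $\frac{\Gamma-\gamma}{m(a,b)}\int_x^{t^*}(t-x)w(t)\,dt$; (5) conclude the inequality for general $f$. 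The main obstacle is step (4): justifying rigorously that the optimum over $\{f': \gamma\le f'\le\Gamma\}$ is attained at the step function switching precisely at $t^*$, rather than just bounding crudely — this requires exploiting the sign pattern of $K(x,t)-($appropriate constant$)$ on $[a,x]$ versus $[x,b]$ and matching it against the endpoint constraint, and it is here that the precise definition of $t^*$ is forced upon us.
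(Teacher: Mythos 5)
Your skeleton --- Peano-kernel representation, existence and uniqueness of $t^*$ by strict monotonicity of the moment function, and extremality of a two-valued derivative --- is the same one the paper uses (it obtains this theorem as the case $c=1$ of its Theorem 2.1 via the kernel $\mathcal{P}$). But as written the proposal has a genuine gap at exactly the point you flag as ``the main obstacle.'' The detour through $C=(\Gamma+\gamma)/2$ and $|f'(t)-C|\le\frac{\Gamma-\gamma}{2}$ is a dead end: because $\int_a^b K(x,t)\,dt\neq0$ in the weighted case, that route only reproduces the weaker Mati\'c--Pe\v{c}ari\'c--Ujevi\'c-type constants, and you abandon it without replacing it by anything concrete. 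You then defer the decisive step (4) to an unproved rearrangement/bathtub argument under the constraint $f(b)-f(a)=\int_a^b f'$. That constrained-optimization framing is both unnecessary and unjustified as stated, so the inequality is not actually established.

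The device that closes the gap is to absorb the linear correction into the kernel \emph{before} optimizing: with $u_{w,1}(x)=\frac{x-\sigma(a,b)}{b-a}$ set $\mathcal{P}(x,t)=\frac{1}{m(a,b)}\int_a^t w(u)\,du-u_{w,1}(x)$ for $t<x$ and $\mathcal{P}(x,t)=\frac{1}{m(a,b)}\int_b^t w(u)\,du-u_{w,1}(x)$ for $t\ge x$, so that $\mathcal{L}_w(f)(x)=\int_a^b\mathcal{P}(x,t)f'(t)\,dt$ with no side constraint on $f'$. Then $t^*$ is, by its defining equation, precisely the unique zero of $\mathcal{P}(x,\cdot)$ on $[x,b]$ when $a\le x\le\sigma(a,b)$ (symmetrically on $[a,x]$ otherwise), the sign pattern $\mathcal{P}\ge0$ on $[a,x]\cup[t^*,b]$ and $\mathcal{P}<0$ on $(x,t^*)$ makes the bound $\int_a^b\mathcal{P}f'\le\Gamma\int_{\{\mathcal{P}\ge0\}}\mathcal{P}+\gamma\int_{\{\mathcal{P}<0\}}\mathcal{P}$ immediate (note the extremal $f'$ switches at both $x$ and $t^*$, not only at $t^*$), and the remaining work is the explicit computation $\int_{\{\mathcal{P}\ge0\}}\mathcal{P}\,dt=\nu(x,t^*)=\frac{1}{m(a,b)}\int_x^{t^*}(t-x)w(t)\,dt$ and $\int_{\{\mathcal{P}<0\}}\mathcal{P}\,dt=-\nu(x,t^*)$, which constitutes the bulk of the paper's proof and is absent from yours. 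Your step (3) is essentially right, but the solvability condition you need is the sharper $\frac{m(a,b)}{b-a}\left(\sigma(a,b)-x\right)\le m(x,b)$ (so that $t^*\ge x$ and the integral $\int_x^{t^*}$ is the correct quantity), not merely boundedness by $m(a,b)$; it follows from $m(a,b)\left(\sigma(a,b)-x\right)=\int_a^b(t-x)w(t)\,dt\le(b-x)\,m(x,b)$.
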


For each $x\in[a,b]$ consider the linear functional
$\mathcal{L}_{w,c}:C[a,b]\to \mathbb{R}$ defined by
$$ \mathcal{L}_{w,c}(f)(x):=f(x)-\displaystyle\frac{1}{m(a,b)}\int_{a}^bf(t)w(t)dt-c\cdot\displaystyle\frac{f(b)-f(a)}{b-a}\left(x-\sigma(a,b)\right),
\textrm{ where } c\geq 0.$$ In this paper we propose the weighted
analogue of (\ref{e33}). Also, new inequalities of $
\mathcal{L}_{w,c}$ will be considered involving the least concave
majorants of the first order moduli of continuity.

\section{Generalized Ostrowski-Gr\"{u}ss type inequalities}
In this section we will give the upper bounds of $\mathcal{L}_{w,c}$
involving differences of upper and lower bounds of first order
derivatives. First, we need the following  lemma.

 \begin{lem} For $c\in [0,1], $ there exists a unique $t^*\in[a,b]$
 satisfying
 $$ u_{w,c}(x)=\left\{\begin{array}{l}
 \displaystyle\frac{1}{m(a,b)}\int_{b}^{t^*}w(u)du,\, a\leq x\leq \sigma(a,b)\\
 \vspace{-0.4cm}\\
 \displaystyle\frac{1}{m(a,b)}\int_{a}^{t^*}w(u)du,\,
 \sigma(a,b)<x\leq b,
 \end{array}\right. $$
 where $u_{w,c}(x)=\displaystyle\frac{c}{b-a}\left(x-\sigma(a,b)\right)$.
 \end{lem}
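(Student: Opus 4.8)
The plan is to treat the two cases separately and in each one reduce the assertion to the intermediate value theorem plus strict monotonicity. Consider first the case $a\le x\le \sigma(a,b)$. Here $u_{w,c}(x)=\dfrac{c}{b-a}\bigl(x-\sigma(a,b)\bigr)\le 0$, since $x-\sigma(a,b)\le 0$ and $c\ge 0$; moreover, because $c\le 1$ and $\sigma(a,b)\in(a,b)$, one checks that $u_{w,c}(x)\ge -\dfrac{1}{m(a,b)}\,m(a,b)=-1$, so $u_{w,c}(x)\in[-1,0]$. Now define $\varphi(s):=\dfrac{1}{m(a,b)}\int_{b}^{s}w(u)\,du$ for $s\in[a,b]$. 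Since $w>0$ on $(a,b)$, $\varphi$ is continuous and strictly increasing in $s$, with $\varphi(b)=0$ and $\varphi(a)=-\dfrac{1}{m(a,b)}\int_a^b w(u)\,du=-1$. Hence $\varphi$ maps $[a,b]$ bijectively onto $[-1,0]$, and since $u_{w,c}(x)\in[-1,0]$ there is exactly one $t^*\in[a,b]$ with $\varphi(t^*)=u_{w,c}(x)$, which is the claim.

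The case $\sigma(a,b)<x\le b$ is symmetric. Now $u_{w,c}(x)=\dfrac{c}{b-a}\bigl(x-\sigma(a,b)\bigr)\ge 0$, and using $c\le 1$ together with $x\le b$ and $\sigma(a,b)>a$ one gets $u_{w,c}(x)\le \dfrac{1}{m(a,b)}\,m(a,b)=1$, so $u_{w,c}(x)\in[0,1]$. Define $\psi(s):=\dfrac{1}{m(a,b)}\int_{a}^{s}w(u)\,du$; then $\psi$ is continuous, strictly increasing, with $\psi(a)=0$ and $\psi(b)=1$, so $\psi$ is a bijection of $[a,b]$ onto $[0,1]$. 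Since $u_{w,c}(x)\in[0,1]$, there is a unique $t^*\in[a,b]$ with $\psi(t^*)=u_{w,c}(x)$, as required.

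The only genuinely nontrivial point — the "main obstacle" — is verifying the range inclusions $u_{w,c}(x)\in[-1,0]$ (resp. $[0,1]$), i.e.\ the bound $|u_{w,c}(x)|\le 1$; everything else is an immediate application of the intermediate value theorem and the strict positivity of $w$. For this bound, in the first case we must show $\dfrac{c}{b-a}\bigl(\sigma(a,b)-x\bigr)\le 1$. Since $0\le c\le 1$ it suffices to show $\sigma(a,b)-x\le b-a$, and this is clear because $\sigma(a,b)=\dfrac{M(a,b)}{m(a,b)}\in(a,b)$ (as $w>0$ forces the weighted mean to lie strictly inside the interval) while $x\ge a$, so $\sigma(a,b)-x<b-a$. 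The second case is identical after replacing $\sigma(a,b)-x$ by $x-\sigma(a,b)<b-a$. This completes the proof. \eop
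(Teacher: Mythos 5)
Your argument is correct as a proof of the lemma as it is literally stated, but it proves a weaker range inclusion than the paper does, and the difference matters downstream. Both proofs reduce the claim to the continuity and strict monotonicity of $\varphi(s)=\frac{1}{m(a,b)}\int_b^s w(u)\,du$ (resp.\ $\psi$); the divergence is in where the root $t^*$ is located. You work on all of $[a,b]$ and only need the easy bound $u_{w,c}(x)\in[\varphi(a),\varphi(b)]=[-1,0]$, which follows from $c\le 1$ and $\sigma(a,b)\in(a,b)$. The paper instead restricts attention to $t\in[x,b]$ and establishes the sharper inequality
$$u_{w,c}(x)=-\frac{c}{(b-a)m(a,b)}\int_a^b(t-x)w(t)\,dt\;\ge\;-\frac{c}{(b-a)m(a,b)}\int_x^b(t-x)w(t)\,dt\;\ge\;\frac{1}{m(a,b)}\int_b^x w(u)\,du=\varphi(x),$$
which places $t^*$ in $[x,b]$ (and symmetrically in $[a,x]$ when $\sigma(a,b)<x\le b$). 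That extra localization is not part of the lemma's wording, but it is used tacitly afterwards: the sign pattern of ${\mathcal P}(x,\cdot)$ in the proof of Theorem \ref{t2.1} and the nonnegativity of $\nu(x,t^*)=\frac{1}{m(a,b)}\int_x^{t^*}(t-x)w(t)\,dt$ both presuppose $x\le t^*\le b$ in the first case. So your proof is a clean, more elementary verification of the literal statement; if the lemma is to be used as the paper uses it, you would need to upgrade your estimate from $u_{w,c}(x)\ge\varphi(a)$ to $u_{w,c}(x)\ge\varphi(x)$, which is exactly the chain of inequalities the paper supplies.
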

 \begin{proof} Let us consider $a\leq x\leq \sigma(a,b)$ and
 $f(t)=\displaystyle\frac{1}{m(a,b)}\int_{b}^tw(u)du-u_{w,c}(x)$, $t\in
 [x,b]$.\\
 Since $f$ is strictly increasing on $(x,b)$, $f(b)=-u_{w,c}(x)\geq
 0$ and
 $f(x)=\displaystyle\frac{1}{m(a,b)}\int_{b}^xw(u)du-u_{w,c}(x)$,
 then to show that $t^*\in[x,b]$ exists it will suffice to establish
 that $f(x)\leq 0$.\\
 It follows
 \begin{align*}
 u_{w,c}(x)&=\displaystyle\frac{c}{b-a}\left(x-\sigma(a,b)\right)=\frac{c}{b-a}\left(x-\frac{M(a,b)}{m(a,b)}\right)=
 -\frac{c}{(b-a)m(a,b)}\int_a^b(t-x)w(t)dt\\
 &=-\displaystyle\frac{c}{(b-a)m(a,b)}\left\{\int_a^x(t-x)w(t)dt+\int_x^b(t-x)w(t)dt\right\}\\
 &\geq -\displaystyle\frac{c}{(b-a)m(a,b)}\int_x^b(t-x)w(t)dt\geq -\displaystyle\frac{c}{(b-a)m(a,b)}(b-x)\int_x^b
 w(t)dt\\
 &\geq -\displaystyle\frac{c}{m(a,b)}\int_x^bw(t)dt\geq \displaystyle\frac{1}{m(a,b)}\int_b^x
 w(t)dt.
 \end{align*}In a similar way for $\sigma(a,b)<x\leq b$ follows that there exists a unique $t^*\in
 [a,x]$ such that $u_{w,c}(x)=\displaystyle\frac{1}{m(a,b)}\int_{a}^{t^*}w(u)du$.
 \end{proof}

 \noindent Denote
$$ {\mathcal P}(x,t)=\left\{\begin{array}{l}\displaystyle\frac{1}{m(a,b)}\int_{a}^t w(u)du-u_{w,c}(x), a\leq t<x,\\
\vspace{-0.4cm}\\
\displaystyle\frac{1}{m(a,b)}\int_{b}^t w(u)du-u_{w,c}(x), x\leq
t\leq b.\end{array}\right. $$
 It is easy to verify that, for all $f\in C^{1}[a,b]$,
 $ \mathcal{L}_{w,c}(f)(x)=\displaystyle\int_a^b{\mathcal{P}(x,t)f^{\prime}(t)dt}$.

 \begin{thm}\label{t2.1} For all $x\in[a,b], c\in[0,1]$ and $f\in C^1[a,b]$ we
 have
 \begin{equation}\label{e3.3}(1-c)(x-\sigma(a,b))\gamma+(\gamma-\Gamma)\nu(x,t^*)\leq
  \mathcal{L}_{w,c}(f)(x) \leq
  (1-c)(x-\sigma(a,b))\Gamma+(\Gamma-\gamma)\nu(x,t^*),\end{equation}
 where
 $\nu(x,t^*):=\displaystyle\frac{1}{m(a,b)}\int_{x}^{t^*}(t-x)w(t)dt$.
 \end{thm}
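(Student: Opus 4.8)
The plan is to start from the integral representation $\mathcal{L}_{w,c}(f)(x)=\int_a^b \mathcal{P}(x,t)f'(t)\,dt$ already established before the theorem, and to exploit the sign structure of the Peano-type kernel $\mathcal{P}(x,t)$. Consider the case $a\le x\le\sigma(a,b)$; the other case is symmetric. On $[a,x)$ the kernel equals $\frac{1}{m(a,b)}\int_a^t w(u)\,du-u_{w,c}(x)$, which is increasing in $t$; on $[x,b]$ it equals $\frac{1}{m(a,b)}\int_b^t w(u)\,du-u_{w,c}(x)$, also increasing in $t$, and by Lemma the unique zero on $[x,b]$ is the point $t^*$ from the lemma. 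One checks the jump at $t=x$ and the endpoint values to conclude that $\mathcal{P}(x,\cdot)\le 0$ on $[a,t^*]$ and $\mathcal{P}(x,\cdot)\ge 0$ on $[t^*,b]$ (using $c\in[0,1]$ and $u_{w,c}(x)\le 0$ here).

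Next I would write $f'(t)=\gamma+(f'(t)-\gamma)$ and also $f'(t)=\Gamma-(\Gamma-f'(t))$, so that
\begin{align*}
\mathcal{L}_{w,c}(f)(x)&=\gamma\int_a^b\mathcal{P}(x,t)\,dt+\int_a^b\mathcal{P}(x,t)\bigl(f'(t)-\gamma\bigr)\,dt.
\end{align*}
The constant integral $\int_a^b\mathcal{P}(x,t)\,dt$ should be computed explicitly; integrating by parts (or directly) it collapses to $(1-c)(x-\sigma(a,b))$, which is exactly the coefficient of $\gamma$ and $\Gamma$ appearing in the statement. For the remainder term, on $[a,t^*]$ we have $\mathcal{P}\le 0$ and $f'-\gamma\ge 0$, so that part of the integral is $\le 0$; on $[t^*,b]$ we have $\mathcal{P}\ge 0$ and $f'-\gamma\le \Gamma-\gamma$, so bounding $f'-\gamma$ by $0$ from below and by $\Gamma-\gamma$ from above and dropping the negative piece gives
\[
\int_a^b\mathcal{P}(x,t)\bigl(f'(t)-\gamma\bigr)\,dt\le (\Gamma-\gamma)\int_{t^*}^b\mathcal{P}(x,t)\,dt .
\]
Then one shows $\int_{t^*}^b\mathcal{P}(x,t)\,dt=\nu(x,t^*)=\frac{1}{m(a,b)}\int_x^{t^*}(t-x)w(t)\,dt$; this is a short integration-by-parts computation using that $\mathcal{P}(x,t)=\frac{1}{m(a,b)}\int_b^t w-u_{w,c}(x)$ on $[x,b]$, that $\mathcal{P}(x,t^*)=0$, and the definition of $u_{w,c}$. (Note $\nu(x,t^*)\le 0$ here since $t^*\le x$ is false — rather $t^*\in[x,b]$, but the orientation in $\int_x^{t^*}$ with $t-x\ge0$ makes $\nu\ge 0$; the sign bookkeeping must be done carefully.) This yields the upper bound $\mathcal{L}_{w,c}(f)(x)\le (1-c)(x-\sigma(a,b))\gamma+(\Gamma-\gamma)\nu(x,t^*)$, and symmetrically, writing $f'(t)=\Gamma-(\Gamma-f'(t))$ and using the opposite sign estimates, one gets the lower bound $(1-c)(x-\sigma(a,b))\Gamma+(\gamma-\Gamma)\nu(x,t^*)\le \mathcal{L}_{w,c}(f)(x)$, which rearranges to the left-hand inequality in \eqref{e3.3}.

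Finally, the case $\sigma(a,b)<x\le b$ is handled by the analogous argument with the roles of $[a,t^*]$ and $[t^*,b]$ interchanged and the signs of $u_{w,c}(x)$ and of the kernel pieces reversed; the second branch of Lemma supplies the corresponding $t^*\in[a,x]$. I expect the main obstacle to be the bookkeeping of signs: verifying that $\mathcal{P}(x,\cdot)$ changes sign exactly once, at $t^*$, in each case, and keeping the inequalities oriented correctly when we split off $\gamma$ versus $\Gamma$ and when we discard the sign-definite pieces. The two explicit integral evaluations — that $\int_a^b\mathcal{P}(x,t)\,dt=(1-c)(x-\sigma(a,b))$ and that $\int_{t^*}^b\mathcal{P}(x,t)\,dt=\nu(x,t^*)$ — are routine integrations by parts once the kernel's piecewise form and the defining relation for $t^*$ are in hand.
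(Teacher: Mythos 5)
Your overall strategy (exploit the sign pattern of $\mathcal{P}(x,\cdot)$ and bound $f'$ by $\gamma$ and $\Gamma$ on the regions where the kernel has a fixed sign) is the same as the paper's, but your sign analysis is wrong, and the error propagates to the final bound. For $a\le x\le\sigma(a,b)$ you correctly note $u_{w,c}(x)\le 0$; but then on $[a,x)$ the kernel is $\mathcal{P}(x,t)=\frac{1}{m(a,b)}\int_a^t w(u)\,du-u_{w,c}(x)\ge 0$, not $\le 0$. The kernel is \emph{not} single-signed on $[a,t^*]$: it is nonnegative on $[a,x]$, jumps to a negative value at $t=x$, increases through zero at $t^*\in[x,b]$, and is nonnegative on $[t^*,b]$. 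So the correct sign structure is $\mathcal{P}\ge 0$ on $[a,x]\cup[t^*,b]$ and $\mathcal{P}<0$ on $(x,t^*)$ (this is what the paper uses). A concrete counterexample to your claims: $w\equiv 1$ on $[0,1]$, $c=1$, $x=1/4$ gives $u_{w,c}(x)=-1/4$, $t^*=3/4$, $\mathcal{P}(x,t)=t+1/4>0$ on $[0,1/4)$, and moreover $\int_{t^*}^b\mathcal{P}(x,t)\,dt=1/32$ while $\nu(x,t^*)=1/8$, so your asserted identity $\int_{t^*}^b\mathcal{P}=\nu(x,t^*)$ is also false. The correct identities, which the paper computes, are $\int_x^{t^*}\mathcal{P}(x,t)\,dt=-\nu(x,t^*)$ and $\int_a^x\mathcal{P}(x,t)\,dt+\int_{t^*}^b\mathcal{P}(x,t)\,dt=(1-c)(x-\sigma(a,b))+\nu(x,t^*)$ (your evaluation $\int_a^b\mathcal{P}=(1-c)(x-\sigma(a,b))$ is the sum of these and is right).

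Because of this, the bound you end up with, $\mathcal{L}_{w,c}(f)(x)\le (1-c)(x-\sigma(a,b))\gamma+(\Gamma-\gamma)\nu(x,t^*)$, is not the theorem's upper bound --- the statement has $\Gamma$, not $\gamma$, multiplying $(1-c)(x-\sigma(a,b))$, and the two do not ``rearrange'' into each other. Your decomposition $f'=\gamma+(f'-\gamma)$ does work once the sign structure is fixed: the positive part of the remainder must be estimated over $[a,x]\cup[t^*,b]$, giving $\int_a^b\mathcal{P}\,(f'-\gamma)\le(\Gamma-\gamma)\bigl[(1-c)(x-\sigma(a,b))+\nu(x,t^*)\bigr]$, and adding $\gamma(1-c)(x-\sigma(a,b))$ recovers exactly $(1-c)(x-\sigma(a,b))\Gamma+(\Gamma-\gamma)\nu(x,t^*)$. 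So the missing idea is precisely the one you flagged as the ``main obstacle'': the kernel does not change sign exactly once at $t^*$, and getting that right is what produces the correct coefficient $\Gamma$ in the first term.
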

 \begin{proof}
 If $a\leq x\leq \sigma(a,b)$, then
 $${\mathcal{P}}(x,t)\geq 0,\textrm{ for } t\in[a,x]\cup[t^*,b] \textrm{ and } { \mathcal{P}}(x,t)<0, \textrm{ for } t\in(x,t^*). $$
 Also, if $\sigma(a,b)\leq x\leq b$, it follows
$${\mathcal{P}}(x,t)\leq 0,\textrm{ for } t\in[a,t^*]\cup[x,b] \textrm{ and } { \mathcal{P}}(x,t)>0, \textrm{ for } t\in(t^*,x). $$
Let $a\leq x\leq \sigma(a,b)$. It follows
$$ \mathcal{L}_{w,c}(f)(x)\leq \Gamma\left(\displaystyle \int_{a}^x{\mathcal P}(x,t)dt+\int_{t^*}^b{\mathcal P}(x,t)dt \right)+\gamma\int_{x}^{t^*}{\mathcal P}(x,t)dt.$$
We have
 \begin{align*} \displaystyle \int_{a}^x\!\!{\mathcal
P}(x,t)dt\!+\!\int_{t^*}^b\!\!{\mathcal P}(x,t)dt\!=\!&\displaystyle
\int_{a}^x\left(\displaystyle\frac{1}{m(a,b)}\int_{a}^t\!\!w(u)du\!-\!u_{w,c}(x)\right)dt\!+\!\int_{t^*}^b\!\!\left(\displaystyle\frac{1}{m(a,b)}\int_{b}^t\!\!w(u)du\!-\!u_{w,c}(x)\right)dt\\
&=\displaystyle\frac{1}{m(a,b)}\left\{x\int_{a}^xw(u)du-\int_{a}^xtw(t)dt-t^*\int_{b}^{t^*}w(u)du-\int_{t^*}^btw(t)dt\right\}\\
&-u_{w,c}(x)(x\!-\!a\!+\!b\!-\!t^*)\!=\!\displaystyle\frac{1}{m(a,b)}\left\{x\int_{a}^xw(u)du\!-\!\int_{a}^x\!\!tw(t)dt-\int_{t^*}^btw(t)dt\right\}\\
&-u_{w,c}(x)(x-a+b)+t^*\left(u_{w,c}(x)-\displaystyle\frac{1}{m(a,b)}\int_{b}^{t^*}w(u)du\right)\\
&=\displaystyle\frac{1}{m(a,b)}\left\{x\int_{a}^x\!\!w(u)du\!-\!\int_a^x\!\!
tw(t)dt\!-\!(x\!-\!a\!+\!b)\int_b^{t^*}\!\!w(u)du\!-\!\int_{t^*}^b\!\!tw(t)dt\right\}\\
&=\displaystyle\frac{1}{m(a,b)}\left\{\int_{t^*}^b(x-a+b-t)w(t)dt+\int_a^x(x-t)w(t)dt\right\}\\
&=\displaystyle\frac{1}{m(a,b)}\left\{(b-a)\int_{t^*}^b
w(t)dt+\int_a^b(x-t)w(t)dt-\int_x^{t^*}(x-t)w(t)dt\right\}\\
&=\displaystyle\frac{1}{m(a,b)}\left\{-(b\!-\!a)m(a,b)u_{w,c}(x)\!+\!xm(a,b)\!-\!M(a,b)\!+\!\int_{x}^{t^*}\!\!(t\!-\!x)w(t)dt\right\}\\
&=\displaystyle(1-c)(x-\sigma(a,b))+\frac{1}{m(a,b)}\int_{x}^{t^*}(t-x)w(t)dt.
\end{align*}
\begin{align*}
\displaystyle{\int_{x}^{t^*}{\mathcal
P}(x,y)dt}&=\displaystyle\int_{x}^{t^*}\left\{\frac{1}{m(a,b)}\int_{b}^tw(u)du-u_{w,c}(x)\right\}dt\\
&=\displaystyle\frac{1}{m(a,b)}\left\{t^*\int_{b}^{t^*}w(u)du-x\int_{b}^xw(u)du-\int_{x}^{t^*}tw(t)dt\right\}-u_{w,c}(x)(t^*-x)\\
&=\displaystyle
t^*\left\{\frac{1}{m(a,b)}\int_{b}^{t^*}w(u)du-u_{w,c}(x)\right\}-\frac{1}{m(a,b)}\left\{x\int_{b}^x
w(u)du+\int_{x}^{t^*}tw(t)dt\right\}\\
&+x\cdot\displaystyle\frac{1}{m(a,b)}\int_{b}^{t^*}w(u)du=-\frac{1}{m(a,b)}\int_{x}^{t^*}(t-x)w(t)dt=-\nu(x,t^*).
\end{align*}
Therefore,
\begin{align}
\mathcal{L}_{w,c}(f)(x)&\leq
\Gamma\left[(1-c)(x-\sigma(a,b))+\nu(x,t^*)
\right]-\gamma\nu(x,t^*)\label{e3.2}\\
&=(1-c)(x-\sigma(a,b))\Gamma+(\Gamma-\gamma)\nu(x,t^*).\nonumber
\end{align}
By  similar reasoning it follows that (\ref{e3.2}) is also valid if
$\sigma(a,b)\leq x\leq b$.

If we write (\ref{e3.2}) for $-f$ instead of $f$, we obtain
 $$ \mathcal{L}_{w,c}(f)(x) \geq
(1-c)(x-\sigma(a,b))\gamma+(\gamma-\Gamma)\nu(x,t^*). $$
 \end{proof}
 \begin{cor}
 For all $x\in[a,b]$ and $f\in C^{1}[a,b]$, we obtain
 \begin{equation}\label{e3.4}
\left| \mathcal{L}_{w}(f)(x) \right|\leq
\displaystyle\frac{\Gamma-\gamma}{m(a,b)}\int_{x}^{t^*}(t-x)w(t)dt.
\end{equation}
 \end{cor}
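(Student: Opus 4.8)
The plan is to recognise that the Corollary is nothing but the case $c=1$ of Theorem~\ref{t2.1}. First I would note that substituting $c=1$ into the definition of $\mathcal{L}_{w,c}$ gives back exactly $\mathcal{L}_w$, so that $\mathcal{L}_w(f)(x)=\mathcal{L}_{w,1}(f)(x)$ for every $f\in C^1[a,b]$ and every $x\in[a,b]$. Correspondingly, for $c=1$ one has $u_{w,1}(x)=\frac{1}{b-a}\left(x-\sigma(a,b)\right)$, and the defining relation for $t^*$ in the Lemma becomes $\frac{1}{m(a,b)}m(t^*,b)=\frac{1}{b-a}\left(\sigma(a,b)-x\right)$ when $a\le x\le\sigma(a,b)$ (and the symmetric identity when $\sigma(a,b)<x\le b$); this is precisely the characterisation of the unique point $t^*$ occurring in \eqref{e34}. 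Hence the $t^*$ here and the quantity $\nu(x,t^*)=\frac{1}{m(a,b)}\int_x^{t^*}(t-x)w(t)\,dt$ appearing in Theorem~\ref{t2.1} are the same objects as in the statement of the Corollary.

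Next I would simply set $c=1$ in \eqref{e3.3}. The terms $(1-c)(x-\sigma(a,b))\gamma$ on the left and $(1-c)(x-\sigma(a,b))\Gamma$ on the right both vanish, and \eqref{e3.3} collapses to
\[
-(\Gamma-\gamma)\,\nu(x,t^*)\ \le\ \mathcal{L}_w(f)(x)\ \le\ (\Gamma-\gamma)\,\nu(x,t^*).
\]
Now $\Gamma-\gamma\ge 0$, and $\nu(x,t^*)\ge 0$ as well: by the Lemma, $t^*\in[x,b]$ when $x\le\sigma(a,b)$ and $t^*\in[a,x]$ when $x>\sigma(a,b)$, so the integral $\int_x^{t^*}(t-x)w(t)\,dt$ is nonnegative in either case. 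Therefore the displayed two-sided estimate is equivalent to
\[
\bigl|\mathcal{L}_w(f)(x)\bigr|\ \le\ \frac{\Gamma-\gamma}{m(a,b)}\int_x^{t^*}(t-x)w(t)\,dt,
\]
which is exactly \eqref{e3.4}.

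There is essentially no real obstacle here: the Corollary is an immediate specialisation of Theorem~\ref{t2.1}, and the only point meriting a line of justification is the bookkeeping identification of $\mathcal{L}_{w,1}$ with $\mathcal{L}_w$ and of the two descriptions of the point $t^*$. It is worth remarking that in this way the Corollary recovers the weighted Ostrowski--Gr\"{u}ss inequality \eqref{e34} of Roumeliotis \cite{R}, so that the substitution $c=1$ in Theorem~\ref{t2.1} plays, in the weighted setting, exactly the role that $c=1$ in \eqref{e33} played for Cheng's inequality \eqref{ec1.1}.
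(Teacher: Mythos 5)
Your proposal is correct and follows exactly the paper's own (one-line) argument: specialise \eqref{e3.3} to $c=1$, observe that the $(1-c)$ terms vanish, and read off the two-sided bound as an absolute-value estimate. The extra remarks on identifying $\mathcal{L}_{w,1}$ with $\mathcal{L}_w$ and on the nonnegativity of $\nu(x,t^*)$ are sound bookkeeping that the paper leaves implicit.
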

 \begin{proof}The inequality (\ref{e3.4}) follows from (\ref{e3.3})
 with $c=1$.
 \end{proof}
\begin{rem}The coefficient  $\displaystyle\frac{1}{m(a,b)}\int_{x}^{t^*}(t-x)w(t)dt$ is sharp in the sense that it cannot be replaced by a smaller one. The inequality (\ref{e3.4}) holds for all $x\in[a,b]$. Let $x=\sigma(a,b)$ and
$$ f(t)=\left\{\begin{array}{l} \Gamma(t-a),\, a\leq t<\sigma(a,b)\\
\vspace{-0.4cm}\\
\Gamma\left(\sigma(a,b)-a\right)+\gamma\left(t-\sigma(a,b)\right),\, \sigma(a,b)\leq t\leq b.\end{array}\right. $$
If $x=\sigma(a,b)$, then $t^*=b$ and it follows
$$  \mathcal{L}_{w}(f)(\sigma(a,b))=\displaystyle\frac{\Gamma-\gamma}{m(a,b)}\int_{\sigma(a,b)}^b\left(t-\sigma(a,b)\right)w(t)dt.$$
Therefore, in (\ref{e3.4})  equality holds.
\end{rem}
 \begin{cor}
 For all $x\in[a,b]$ and $f\in C^{1}[a,b]$, the following inequality
 holds
 \begin{align}
 &-\displaystyle\frac{\|f^{\prime}\|_{\infty}}{m(a,b)}\int_a^b|t-x|w(t)dt\leq(x-\sigma(a,b))\gamma+(\gamma-\Gamma)\nu(x,t^*)\label{e9}\\
 &\leq f(x)-\frac{1}{m(a,b)}\int_{a}^{b}f(t)w(t)dt\nonumber\\
 &\leq
 (x-\sigma(a,b))\Gamma+(\Gamma-\gamma)\nu(x,t^*)\leq \displaystyle\frac{\|f^{\prime}\|_{\infty}}{m(a,b)}\int_a^b|t-x|w(t)dt.\nonumber
 \end{align}
 \end{cor}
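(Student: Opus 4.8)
The plan is to read off the inequality (\ref{e9}) as the case $c=0$ of Theorem \ref{t2.1}, flanked by the two crude estimates coming from $\|f'\|_\infty$. First note that $u_{w,0}(x)\equiv 0$, so that $\mathcal{L}_{w,0}(f)(x)=f(x)-\frac{1}{m(a,b)}\int_a^b f(t)w(t)\,dt$; substituting $c=0$ into (\ref{e3.3}) then gives at once the two inner inequalities of (\ref{e9}). For the two outer inequalities, observe that replacing $f$ by $-f$ leaves $\|f'\|_\infty$, $m(a,b)$ and $\nu(x,t^*)$ unchanged while interchanging $\gamma\leftrightarrow -\Gamma$ and $\Gamma\leftrightarrow -\gamma$; hence it suffices to establish the right-hand estimate $(x-\sigma(a,b))\Gamma+(\Gamma-\gamma)\nu(x,t^*)\le\frac{\|f'\|_\infty}{m(a,b)}\int_a^b|t-x|w(t)\,dt$, the left-hand one following by applying this to $-f$.

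To prove that estimate I would go back into the proof of Theorem \ref{t2.1} with $c=0$: it shows that the right-hand side of (\ref{e3.3}) is precisely $\int_a^b\psi(t)\,\mathcal{P}(x,t)\,dt$, where $\psi(t)=\Gamma$ on the set on which $\mathcal{P}(x,\cdot)\ge 0$ and $\psi(t)=\gamma$ on the set on which $\mathcal{P}(x,\cdot)<0$ (these sets being described there for the two cases $a\le x\le\sigma(a,b)$ and $\sigma(a,b)\le x\le b$). Since $-\|f'\|_\infty\le\gamma\le\Gamma\le\|f'\|_\infty$, on the first set $\psi(t)\mathcal{P}(x,t)=\Gamma\,\mathcal{P}(x,t)\le\|f'\|_\infty|\mathcal{P}(x,t)|$, and on the second $\psi(t)\mathcal{P}(x,t)=\gamma\,\mathcal{P}(x,t)\le-\|f'\|_\infty\,\mathcal{P}(x,t)=\|f'\|_\infty|\mathcal{P}(x,t)|$, the middle inequality here holding because $(\gamma+\|f'\|_\infty)\,\mathcal{P}(x,t)\le 0$. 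Adding the two contributions yields $(x-\sigma(a,b))\Gamma+(\Gamma-\gamma)\nu(x,t^*)\le\|f'\|_\infty\int_a^b|\mathcal{P}(x,t)|\,dt$.

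It then remains to evaluate $\int_a^b|\mathcal{P}(x,t)|\,dt$. With $c=0$ one has $|\mathcal{P}(x,t)|=\frac{1}{m(a,b)}\int_a^t w(u)\,du$ for $a\le t<x$ and $|\mathcal{P}(x,t)|=\frac{1}{m(a,b)}\int_t^b w(u)\,du$ for $x\le t\le b$; interchanging the order of integration (Tonelli, as $w>0$) gives $\int_a^x\!\int_a^t w(u)\,du\,dt=\int_a^x(x-u)w(u)\,du$ and $\int_x^b\!\int_t^b w(u)\,du\,dt=\int_x^b(u-x)w(u)\,du$, whence $\int_a^b|\mathcal{P}(x,t)|\,dt=\frac{1}{m(a,b)}\int_a^b|t-x|w(t)\,dt$. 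This completes the right-hand inequality of (\ref{e9}), and with it the left-hand one. I do not expect a genuine obstacle: the only delicate point is that (\ref{e9}) asserts that the bounds of Theorem \ref{t2.1} are themselves dominated by the $\|f'\|_\infty$-bound, which is not visible from the \emph{statement} of Theorem \ref{t2.1} but drops out of its \emph{proof} as soon as one tracks the sign of $\mathcal{P}(x,\cdot)$; the remaining steps are the case distinction $x\lessgtr\sigma(a,b)$ and the routine Fubini computation.
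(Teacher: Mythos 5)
Your proposal is correct, and for the two \emph{inner} inequalities it coincides with the paper's entire proof, which consists of the single sentence that the result follows from (\ref{e3.3}) with $c=0$. Where you genuinely go beyond the paper is in the two \emph{outer} inequalities, i.e.\ the claim that the bounds of Theorem \ref{t2.1} are themselves dominated by $\frac{\|f'\|_\infty}{m(a,b)}\int_a^b|t-x|w(t)\,dt$: the paper asserts this (it is exactly the content of the subsequent remark that (\ref{e9}) improves the classical weighted Ostrowski inequality) but never proves it. Your argument for it is sound: with $c=0$ the upper bound in (\ref{e3.3}) is indeed $\int_a^b\psi(t)\mathcal{P}(x,t)\,dt$ with $\psi=\Gamma$ on $\{\mathcal{P}\ge 0\}$ and $\psi=\gamma$ on $\{\mathcal{P}<0\}$, the pointwise estimate $\psi\,\mathcal{P}\le\|f'\|_\infty|\mathcal{P}|$ follows from $-\|f'\|_\infty\le\gamma\le\Gamma\le\|f'\|_\infty$, and your Tonelli computation of $\int_a^b|\mathcal{P}(x,t)|\,dt=\frac{1}{m(a,b)}\int_a^b|t-x|w(t)\,dt$ is consistent with the identity (\ref{xx}) the paper derives later in Proposition \ref{p3.1} (note that for $c=0$ one has $t^*=b$ when $x\le\sigma(a,b)$ and $t^*=a$ otherwise, so $(1-c)|x-\sigma(a,b)|+2\nu(x,t^*)$ does reduce to that integral). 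The reduction of the left outer inequality to the right one via $f\mapsto -f$ is also legitimate, since $t^*$ and $\nu$ depend only on $x$ and $w$. In short: same route as the paper for what the paper proves, plus a correct filling of the gap the paper leaves open.
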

 \begin{proof}
 This result is a consequence of (\ref{e3.3}), for $c=0$.
 \end{proof}
 \begin{rem} a) Inequality (\ref{e3.4}) was established by J.
 Roumeliotis \cite{R}.

 b) Inequality (\ref{e9}) improves the classical weighted Ostrowski
 inequality
 $$ \left| f(x)-\frac{1}{m(a,b)}\displaystyle\int_{a}^{b}f(t)w(t)dt\right|\leq \|f^{\prime}\|_{\infty}\cdot\frac{1}{m(a,b)}\int_{a}^{b}|t-x|w(t)dt. $$

 \end{rem}

\section{Ostrowski-Gr\"{u}ss-type inequalities in terms of the least concave majorant}
The aim of this section is to extend the inequalities mentioned in the previous section, by using the least concave majorant of the modulus of continuity. This approach was inspired by a paper of Gavrea $\&$ Gavrea \cite{G} who were the first to observe the possibility of using moduli in this context.
\begin{pr}\label{p3.1} The linear functional $ \mathcal{L}_{w,c}:C[a,b]\to
\mathbb{R}$ satisfies

i) $\left| \mathcal{L}_{w,c}(f)(x) \right|\leq4\| f\|_{\infty}$, for
all $f\in C[a,b]$.

ii) $\left| \mathcal{L}_{w,c}(f)(x)
\right|\leq\left[(c-1)\left|x-\sigma(a,b)\right|+2\nu(x,t^*)\right]\|
f^{\prime}\|_{\infty}$, for all $f\in C^1[a,b]$, where $c\in[0,1]$
and $\nu$ is defined in Theorem \ref{t2.1}.
\end{pr}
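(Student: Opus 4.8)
The plan is to treat the two estimates separately; both are short. For part i), I would simply bound the three summands of $\mathcal{L}_{w,c}(f)(x)=f(x)-\frac{1}{m(a,b)}\int_a^bf(t)w(t)dt-c\,\frac{f(b)-f(a)}{b-a}(x-\sigma(a,b))$ by sup-norms. The first term contributes $\|f\|_\infty$; since $w>0$ and $\frac{1}{m(a,b)}\int_a^bw(t)dt=1$, the weighted mean is a convex average of values of $f$ and hence contributes at most $\|f\|_\infty$; for the last term one uses $|f(b)-f(a)|\le 2\|f\|_\infty$ together with $0\le c\le 1$ and $|x-\sigma(a,b)|\le b-a$ (both $x$ and $\sigma(a,b)=M(a,b)/m(a,b)$ lie in $[a,b]$, the latter because it is a weighted average of points of $[a,b]$), so this term is $\le 2\|f\|_\infty$. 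Adding the three bounds gives $4\|f\|_\infty$.

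For part ii), the natural device is the integral representation $\mathcal{L}_{w,c}(f)(x)=\int_a^b\mathcal{P}(x,t)f^{\prime}(t)dt$ recorded before Theorem \ref{t2.1}, which gives $|\mathcal{L}_{w,c}(f)(x)|\le\|f^{\prime}\|_\infty\int_a^b|\mathcal{P}(x,t)|dt$; it then suffices to show that $\int_a^b|\mathcal{P}(x,t)|dt=(c-1)|x-\sigma(a,b)|+2\nu(x,t^*)$. I would reuse the sign analysis of $\mathcal{P}(x,\cdot)$ carried out inside the proof of Theorem \ref{t2.1}: for $a\le x\le\sigma(a,b)$ the kernel is $\ge 0$ on $[a,x]\cup[t^*,b]$ and $<0$ on $(x,t^*)$, whence $\int_a^b|\mathcal{P}(x,t)|dt=\int_a^b\mathcal{P}(x,t)dt-2\int_x^{t^*}\mathcal{P}(x,t)dt$; for $\sigma(a,b)<x\le b$ it is $\le 0$ on $[a,t^*]\cup[x,b]$ and $>0$ on $(t^*,x)$, whence $\int_a^b|\mathcal{P}(x,t)|dt=-\int_a^b\mathcal{P}(x,t)dt+2\int_{t^*}^x\mathcal{P}(x,t)dt$. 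Here $\int_a^b\mathcal{P}(x,t)dt=\mathcal{L}_{w,c}(e_1)(x)=(1-c)(x-\sigma(a,b))$ for $e_1(t)=t$, the identity $\int_x^{t^*}\mathcal{P}(x,t)dt=-\nu(x,t^*)$ is precisely the integral computed in the proof of Theorem \ref{t2.1}, and its counterpart $\int_{t^*}^x\mathcal{P}(x,t)dt=\nu(x,t^*)$ follows from the analogous symmetric computation. Substituting, $\int_a^b|\mathcal{P}(x,t)|dt=(1-c)(x-\sigma(a,b))+2\nu(x,t^*)$ when $x\le\sigma(a,b)$ and $=2\nu(x,t^*)-(1-c)(x-\sigma(a,b))$ when $x>\sigma(a,b)$; in either case the first summand equals $-(1-c)|x-\sigma(a,b)|=(c-1)|x-\sigma(a,b)|$, which is the asserted bound.

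Part i) is nothing more than the triangle inequality. In part ii) the only delicate point is the sign bookkeeping: one must keep the two positions of $x$ relative to $\sigma(a,b)$ apart and notice that it is exactly this case split that converts the unsigned quantity coming out of the computation, $\pm(1-c)(x-\sigma(a,b))$, into $(c-1)|x-\sigma(a,b)|$. It is worth flagging that part ii) cannot be obtained simply by inserting $\Gamma=\|f^{\prime}\|_\infty$, $\gamma=-\|f^{\prime}\|_\infty$ into (\ref{e3.3}): for $x>\sigma(a,b)$ that substitution would only yield the weaker bound $(1-c)(x-\sigma(a,b))+2\nu(x,t^*)$, so the direct evaluation of $\int_a^b|\mathcal{P}(x,t)|dt$ above is genuinely needed on that part of the range.
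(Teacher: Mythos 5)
Your proof is correct and follows essentially the same route as the paper: part i) is the triangle-inequality estimate the paper leaves implicit, and part ii) rests on the same identity $\int_a^b|\mathcal{P}(x,t)|\,dt=(c-1)|x-\sigma(a,b)|+2\nu(x,t^*)$, obtained from the same sign analysis of $\mathcal{P}$ and the integrals already computed in the proof of Theorem \ref{t2.1}. The only cosmetic difference is that you evaluate $\int_a^b\mathcal{P}(x,t)\,dt$ by applying the kernel representation to $e_1(t)=t$ instead of reusing the paper's explicit computation of the two partial integrals.
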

\begin{proof} Inequality i) follows immediately from definition
 of $\mathcal{L}_{w,c}$. The second inequality is obtained after
 elementary calculations as follows:
 $$  \left| \mathcal{L}_{w,c}(f)(x)
\right|\leq
\displaystyle\int_{a}^b\left|{\mathcal{P}}(x,t)\right|f^{\prime}(t)dt\leq
\|f^{\prime}\|_{\infty}\int_{a}^b\left|{\mathcal{P}}(x,t)\right|dt
$$
If $a\leq x\leq \sigma(a,b)$, then
\begin{align}\label{xx}
\displaystyle \int_{a}^b\left|{\mathcal{P}}(x,t)\right|dt
&=\displaystyle\int_a^x {\mathcal{P}}(x,t)dt+\int_{t^*}^b
{\mathcal{P}}(x,t)dt-\int_x^{t^*} {\mathcal{P}}(x,t)dt\\
&=(1-c)(x-\sigma(a,b))+2\nu(x,t^{*}).\nonumber
\end{align}
By  similar reasoning, for
$\sigma(a,b)<x\leq b$,  it follows
\begin{align*}
\displaystyle \int_{a}^b\left|{\mathcal{P}}(x,t)\right|dt
&=\displaystyle-\int_a^{t^*} {\mathcal{P}}(x,t)dt-\int_{x}^b
{\mathcal{P}}(x,t)dt+\int_{t^*}^x {\mathcal{P}}(x,t)dt\\
&=(c-1)(x-\sigma(a,b))+2\nu(x,t^{*}).\nonumber
\end{align*}
\end{proof}
\begin{thm} If $f\in C[a,b]$, $c\in[0,1]$, then
$$  \left| \mathcal{L}_{w,c}(f)(x)
\right|\leq
2\tilde{\omega}\left(f;\displaystyle\frac{1}{2}\left[(c-1)\left|x-\sigma(a,b)\right|+2\nu(x,t^*)\right]\right),$$
where $\nu$ is defined in Theorem \ref{t2.1} and $\tilde{\omega}$ is
the least concave majorant of the usual modulus of continuity.
\end{thm}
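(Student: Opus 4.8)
The plan is to obtain the bound directly from Proposition \ref{p3.1} by the standard K-functional device, which is precisely the mechanism that converts a pair of estimates --- one in terms of $\|f\|_\infty$, one in terms of $\|f'\|_\infty$ --- into a bound expressed through the least concave majorant of the modulus of continuity. No new case analysis in $x$ is required, because Proposition \ref{p3.1} already packages the cases $a\le x\le\sigma(a,b)$ and $\sigma(a,b)<x\le b$ into one statement.

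First I would fix $x\in[a,b]$ and abbreviate $\alpha:=(c-1)|x-\sigma(a,b)|+2\nu(x,t^*)$, observing that $\alpha\ge 0$ since, as the proof of Proposition \ref{p3.1} shows, $\alpha=\int_a^b|{\mathcal P}(x,t)|\,dt$. For an arbitrary $g\in C^1[a,b]$ I would split $f=(f-g)+g$, use the linearity of $\mathcal{L}_{w,c}$, bound $|\mathcal{L}_{w,c}(f-g)(x)|\le 4\|f-g\|_\infty$ by part i) of Proposition \ref{p3.1} and $|\mathcal{L}_{w,c}(g)(x)|\le\alpha\|g'\|_\infty$ by part ii), and thus arrive at $|\mathcal{L}_{w,c}(f)(x)|\le 4\bigl(\|f-g\|_\infty+\tfrac{\alpha}{4}\|g'\|_\infty\bigr)$. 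Taking the infimum over all $g\in C^1[a,b]$ turns the right-hand side into $4\,K\!\bigl(\tfrac{\alpha}{4},f;C[a,b],C^1[a,b]\bigr)$, the Peetre K-functional of $f$ relative to the pair $\bigl(C[a,b],C^1[a,b]\bigr)$.

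The single non-elementary ingredient --- the step I would flag as the crux --- is the identification of this K-functional with the least concave majorant of the modulus of continuity, namely the known identity $K\!\bigl(\tfrac{\varepsilon}{2},f;C[a,b],C^1[a,b]\bigr)=\tfrac12\,\tilde{\omega}(f;\varepsilon)$. Applying it with $\varepsilon=\alpha/2$ gives $4\,K(\alpha/4,f)=2\,\tilde{\omega}(f;\alpha/2)$, which is exactly the asserted bound once $\alpha/2$ is written out as $\tfrac12\bigl[(c-1)|x-\sigma(a,b)|+2\nu(x,t^*)\bigr]$. Everything apart from invoking that identity is routine.
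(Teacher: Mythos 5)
Your argument is correct and coincides with the paper's own proof: both split $f=(f-g)+g$, apply parts i) and ii) of Proposition \ref{p3.1}, pass to the infimum over $g\in C^1[a,b]$, and invoke the identity $\inf_{g}\bigl(\|f-g\|_{\infty}+\frac{t}{2}\|g'\|_{\infty}\bigr)=\frac{1}{2}\tilde{\omega}(f;t)$ (the Semenov--Mitjagin relation cited in the paper, i.e.\ your K-functional identity). The only cosmetic difference is that you phrase the infimum explicitly as a Peetre K-functional before applying the identity, which changes nothing in substance.
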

\begin{proof} Taking an arbitrary $g\in C^1[a,b]$ and using Proposition
\ref{p3.1} we obtain
\begin{align*}
\left| \mathcal{L}_{w,c}(f)(x) \right|&\leq \left|
\mathcal{L}_{w,c}(f-g)(x) \right|+\left| \mathcal{L}_{w,c}(g)(x)
\right|\\
&\leq 4\|
f-g\|_{\infty}+\left[(c-1)\left|x-\sigma(a,b)\right|+2\nu(x,t^*)\right]\|g^{\prime}\|_{\infty}.
\end{align*}
Passing to the inf we arrive at
\begin{align*}
\left| \mathcal{L}_{w,c}(f)(x) \right|
&\leq 4\displaystyle\inf_{g\in C^1[a,b]}\left\{\|
f-g\|_{\infty}+\frac{1}{4}\left[(c-1)\left|x-\sigma(a,b)\right|+2\nu(x,t^*)\right]\|g^{\prime}\|_{\infty}\right\}\\
&=2\tilde{\omega}\left(f;\frac{1}{2}\left[(c-1)\left|x-\sigma(a,b)\right|+2\nu(x,t^*)\right]\right),
\end{align*}
so the result follows as a consequence of the relation \cite{10'}:
$$ \displaystyle\inf_{g\in C^1([a,b])}\left(\| f-g\|_{\infty}+\frac{t}{2}\|g^{\prime}\|\right)=\frac{1}{2}\tilde{\omega}(f;t), t\geq 0. $$
\end{proof}

 \begin{cor}
 For all $x\in[a,b]$ and $f\in C^{1}[a,b]$, we obtain
 \begin{equation*}
\left| \mathcal{L}_{w}(f)(x) \right|\leq
2\displaystyle\tilde{\omega}\left(f;\frac{2}{m(a,b)}\int_{x}^{t^*}(t-x)w(t)dt\right).
\end{equation*}
\end{cor}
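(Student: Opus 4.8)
The plan is to obtain this corollary by specializing the preceding theorem to the value $c=1$. First I would check that for $c=1$ the functional $\mathcal{L}_{w,c}$ collapses to $\mathcal{L}_{w}$: comparing the two defining formulas, the middle term $c\cdot\frac{f(b)-f(a)}{b-a}(x-\sigma(a,b))$ becomes precisely $\frac{f(b)-f(a)}{b-a}(x-\sigma(a,b))$, so $\mathcal{L}_{w,1}=\mathcal{L}_{w}$. Likewise $u_{w,1}(x)=\frac{1}{b-a}(x-\sigma(a,b))$, the Lemma applies (since $1\in[0,1]$) and produces the unique $t^{*}$ that enters $\nu(x,t^{*})=\frac{1}{m(a,b)}\int_{x}^{t^{*}}(t-x)w(t)\,dt$ in Theorem \ref{t2.1}; one may also note that this $t^{*}$ is exactly the point appearing in Roumeliotis' inequality (\ref{e34}).

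Next I would simply evaluate the bound of the preceding theorem at $c=1$. Since the summand $(c-1)\left|x-\sigma(a,b)\right|$ then vanishes, that theorem gives
$$\left|\mathcal{L}_{w}(f)(x)\right|\le 2\,\tilde{\omega}\!\left(f;\tfrac12\cdot 2\nu(x,t^{*})\right)=2\,\tilde{\omega}\!\left(f;\nu(x,t^{*})\right).$$
Finally, $\nu(x,t^{*})\ge 0$ (the integrand keeps a fixed sign on the interval of integration because of the position of $t^{*}$ relative to $x$), and the least concave majorant $\tilde{\omega}(f;\cdot)$ is nondecreasing on $[0,\infty)$; hence the argument may be enlarged from $\nu(x,t^{*})$ to $2\nu(x,t^{*})=\frac{2}{m(a,b)}\int_{x}^{t^{*}}(t-x)w(t)\,dt$, which yields exactly the asserted inequality.

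There is essentially no genuine obstacle here; the only point deserving a word of care is the numerical factor inside $\tilde{\omega}$. The plain substitution $c=1$ in fact delivers the slightly sharper statement with $\frac{1}{m(a,b)}\int_{x}^{t^{*}}(t-x)w(t)\,dt$ as the argument, and monotonicity of $\tilde{\omega}(f;\cdot)$ bridges it to the form printed in the corollary. Should one prefer to reach the displayed form without invoking monotonicity, it suffices to redo the three-line argument of the preceding theorem's proof with $c=1$, but with the coefficient of $\|g'\|_{\infty}$ (equal to $2\nu(x,t^{*})$ by Proposition \ref{p3.1}) estimated from above by $4\nu(x,t^{*})$ before passing to the infimum; this lands the factor inside $\tilde{\omega}$ on $2\nu(x,t^{*})=\frac{2}{m(a,b)}\int_{x}^{t^{*}}(t-x)w(t)\,dt$.
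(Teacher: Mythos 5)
Your proposal is correct and follows the paper's (implicit) route exactly: the corollary is the case $c=1$ of the preceding theorem, for which the paper offers no written proof beyond the specialization. Your side remark is also accurate and worth keeping: the plain substitution $c=1$ yields the sharper argument $\frac{1}{m(a,b)}\int_{x}^{t^*}(t-x)w(t)\,dt$ inside $\tilde{\omega}$, so the factor $\frac{2}{m(a,b)}$ printed in the corollary is either a typo or must be justified, as you do, by $\nu(x,t^*)\geq 0$ together with the monotonicity of $\tilde{\omega}(f;\cdot)$.
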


 \begin{cor}
 For all $x\in[a,b]$ and $f\in C^{1}[a,b]$, the following inequality
 holds
 $$
  \left| f(x)-\frac{1}{m(a,b)}\int_{a}^{b}f(t)w(t)dt\right|=
  2\tilde{\omega}\left(f;\displaystyle\frac{1}{2m(a,b)}\left(\int_x^a(t-x)w(t)dt+\int_x^b(t-x)w(t)dt\right)\right).$$
  \end{cor}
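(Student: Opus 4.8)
The plan is to specialise the theorem of this section to the case $c=0$. For $c=0$ the term $c\cdot\frac{f(b)-f(a)}{b-a}\bigl(x-\sigma(a,b)\bigr)$ vanishes, so $\mathcal{L}_{w,0}(f)(x)=f(x)-\frac{1}{m(a,b)}\int_a^b f(t)w(t)dt$, and the preceding theorem gives at once
$$\left|f(x)-\frac{1}{m(a,b)}\int_a^b f(t)w(t)dt\right|\le 2\tilde{\omega}\left(f;\frac{1}{2}\left[-\left|x-\sigma(a,b)\right|+2\nu(x,t^*)\right]\right),$$
where $t^*=t^*(x)$ is the point furnished by Lemma 2.1 with $c=0$ and $\nu(x,t^*)=\frac{1}{m(a,b)}\int_x^{t^*}(t-x)w(t)dt$. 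It therefore only remains to identify the argument of $\tilde{\omega}$ with $\frac{1}{2m(a,b)}\bigl(\int_x^a(t-x)w(t)dt+\int_x^b(t-x)w(t)dt\bigr)$.

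First I would locate $t^*$. Since $u_{w,0}(x)=\frac{0}{b-a}\bigl(x-\sigma(a,b)\bigr)=0$, the relation defining $t^*$ in Lemma 2.1 reduces to $\int_b^{t^*}w(u)du=0$ when $a\le x\le\sigma(a,b)$, forcing $t^*=b$, and to $\int_a^{t^*}w(u)du=0$ when $\sigma(a,b)<x\le b$, forcing $t^*=a$. Consequently $\nu(x,t^*)=\frac{1}{m(a,b)}\int_x^b(t-x)w(t)dt$ in the first case and $\nu(x,t^*)=\frac{1}{m(a,b)}\int_x^a(t-x)w(t)dt$ in the second.

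Next I would use the identity $x-\sigma(a,b)=x-\frac{M(a,b)}{m(a,b)}=-\frac{1}{m(a,b)}\int_a^b(t-x)w(t)dt$ (the computation that opens the proof of Lemma 2.1) and split the integral at $x$, so that $\left|x-\sigma(a,b)\right|=\pm\frac{1}{m(a,b)}\bigl(\int_a^x(t-x)w(t)dt+\int_x^b(t-x)w(t)dt\bigr)$, the sign being chosen according to whether $x\le\sigma(a,b)$ or $x>\sigma(a,b)$. Substituting this together with the value of $\nu(x,t^*)$ found above into $\frac{1}{2}\left[-\left|x-\sigma(a,b)\right|+2\nu(x,t^*)\right]$, and using $\int_a^x(t-x)w(t)dt=-\int_x^a(t-x)w(t)dt$, the two interior integrals recombine and in both cases one is left precisely with $\frac{1}{2m(a,b)}\bigl(\int_x^a(t-x)w(t)dt+\int_x^b(t-x)w(t)dt\bigr)$; inserting this back into the displayed estimate yields the assertion. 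The computation is routine; the only point requiring care is the sign bookkeeping — $\int_a^x(t-x)w(t)dt$ is nonpositive while $\int_x^a(t-x)w(t)dt$ is nonnegative — and checking that the two position cases $x\le\sigma(a,b)$ and $x>\sigma(a,b)$ lead to the same final expression. (In the statement the sign $=$ should of course be read as $\le$, inherited from the theorem.)
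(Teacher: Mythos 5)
Your proof is correct and follows exactly the route the paper intends (the corollary is stated without proof, as the $c=0$ specialization of the preceding theorem, parallel to the earlier corollaries obtained by setting $c=1$ or $c=0$): the identification $t^*=b$ for $x\le\sigma(a,b)$ and $t^*=a$ for $x>\sigma(a,b)$, and the sign bookkeeping showing both cases collapse to $\frac{1}{2m(a,b)}\int_a^b|t-x|\,w(t)\,dt$, all check out. You are also right that the ``$=$'' in the statement must be read as ``$\leq$''.
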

  \section{Numerical example}
  In this section the inequality (\ref{e9}) is evaluated for some
  specific weight functions.

  \noindent{\bf 1.} Let the weight function $w$  be the
  probability density function of the Beta distribution,
  $$w_{p,q}(x)=\left\{\begin{array}{l}\displaystyle\frac{1}{B(p,q)}x^{p-1}(1-x)^{q-1},\,x\in[0,1],\\
  \vspace{-0.4cm}\\
  0,\, x\in \mathbb{R}\setminus[0,1],\end{array}\right.   $$
  where $B(p,q)=\displaystyle\int_{0}^1 x^{p-1}(1-x)^{q-1}dx$, $p,q>0$.

  Substituting $w_{p,q}$ in relation (\ref{e9}), it follows
  \begin{equation}\label{XY}\displaystyle\left(x-\frac{p}{p+q}\right)\gamma+(\gamma-\Gamma)\tilde{\nu}(x)\leq f(x)-\int_{a}^{b}w_{p,q}(t)f(t)dt\leq
  \left(x-\frac{p}{p+q}\right)\Gamma+(\Gamma-\gamma)\tilde{\nu}(x),
  \end{equation}
  where
  $$\tilde{\nu}(x)=\left\{\begin{array}{l} \displaystyle\frac{p}{p+q}-x-B(x;p+1,q)+xB(x;p,q),\, 0\leq x\leq \frac{p}{p+q},\\
  \vspace{-0.4cm}\\
  xB(x;p,q)-B(x;p+1,q),\,\displaystyle\frac{p}{p+q}<x\leq 1,\end{array}\right.  $$
  and $B(x;p,q)=\displaystyle\frac{1}{B(p,q)}\displaystyle\int_{0}^{x}t^{p-1}(1-t)^{q-1}dt$,
   $0\leq x\leq 1$ is the incomplete Beta function.

   In the below table , for $p=q=\displaystyle\frac{1}{2}$ and
   $f(t)=\displaystyle\frac{t^2}{2}, t\in[0,1]$ we calculate the left hand side
   and the right hand side of  inequality (\ref{XY}):
   \newpage
      \begin{center}
  {\it Table 1. Error estimate of $f(x)-\displaystyle\int_{0}^1w_{p,q}(t)f(t)dt$
  }

{\small \begin{tabular}{|l|l|l|l|l|}\hline
 $x$& $\tilde{\nu}(x)$& l.h.s of (\ref{XY})& r.h.s of (\ref{XY})& $f(x)-\int_{0}^1w_{p,q}(t)f(t)dt$ \\ \hline
0&0.500000000000000 &-0.500000000000000 &0 & -0.187500000000000\\
0.1&0.406636443481054 &-0.406636443481054 & 0.006636443481054&-0.182500000000000\\
0.2& 0.318514120706339&-0.318514120706339 &0.018514120706339 & -0.167500000000000\\
0.3&0.233428745882118 &-0.233428745882118 &0.033428745882118 & -0.142500000000000\\
0.4& 0.150335250602855&-0.150335250602855 &0.050335250602855 &-0.107500000000000 \\
0.5&0.068309886183791 & -0.068309886183791&0.068309886183791 &-0.062500000000000\\
0.6&0.086241033753880 & -0.086241033753880&0.186241033753880 &-0.007500000000000 \\
0.7&0.102438865447664 & -0.102438865447664&0.302438865447664 & 0.057500000000000\\
0.8&0.113681356007205 &-0.113681356007205 &0.413681356007205 & 0.132500000000000\\
0.9& 0.111469208180188&-0.111469208180188 & 0.511469208180188& 0.217500000000000\\
1& 0&0 & 0.500000000000000& 0.312500000000000\\
\hline
  \end{tabular}}
  \end{center}
$  $

  \noindent{\bf 2.} Let the weight $w$  be the probability density
  function of the normal distribution
  $$ w_{m,\sigma}(x)=\displaystyle\frac{1}{\sigma\sqrt{2\pi}}e^{-\frac{(x-m)^2}{2\sigma^2}},\,m,\sigma\in \mathbb{R}, \sigma>0,x\in\mathbb{R}. $$
  Then  we have
  $$ m(a,b)=F(b)-F(a),\textrm{where F is the cumulative distribution}, $$
  $$ \sigma(a,b)=\displaystyle{m}-\frac{\sigma}{\sqrt{2\pi}}\cdot\frac{e^{-\frac{(b-m)^2}{2\sigma^2}}-e^{-\frac{(a-m)^2}{2\sigma^2}}}{F(b)-F(a)}, $$
  $$\nu(x,t^*)\!=\!\left\{\begin{array}{l}\displaystyle\frac{1}{F(b)\!-\!F(a)}
  \left[\frac{-\sigma}{\sqrt{2\pi}}\left(e^{-\frac{(b\!-\!m)^2}{2\sigma^2}}\!-\!e^{-\frac{(x\!-\!m)^2}{2\sigma^2}}\right)\!+\!\left(m-\!x\right)\left(F(b)\!-\!F(x)\right)\right],
  a\!\leq\! x\!\leq \!\sigma(a,b),\\
  \vspace{-0.4cm}\\
  \displaystyle\frac{1}{F(b)\!-\!F(a)}
  \left[\frac{-\sigma}{\sqrt{2\pi}}\left(e^{-\frac{(a\!-\!m)^2}{2\sigma^2}}\!-\!e^{-\frac{(x\!-\!m)^2}{2\sigma^2}}\right)\!+\!\left(m-\!x\right)\left(F(a)\!-\!F(x)\right)\right],
   \sigma(a,b)\!<\! x\!\leq \!b,
  \end{array}\right.  $$
  If we consider the probability density function of the standard
  normal distribution, namely
  $$ w_{0,1}(x)=\displaystyle\frac{1}{\sqrt{2\pi}}e^{-\frac{x^2}{2}}, $$
  inequality (\ref{e9}) on the interval $[0,1]$  becomes
  $$\displaystyle\left(x-\frac{1-e^{-\frac{1}{2}}}{\phi(1)\sqrt{2\pi}}\right)\gamma+(\gamma-\Gamma)\tilde{\nu}(x)\leq
  f(x)-\frac{1}{\phi(1)}\int_a^b f(t)w_{0,1}(t)dt\leq \left(x-\frac{1-e^{-\frac{1}{2}}}{\phi(1)\sqrt{2\pi}}\right)\Gamma+(\Gamma-\gamma)\tilde{\nu}(x) , $$
  where
  $$ \tilde{\nu}(x)=\left\{\begin{array}{l}\displaystyle\frac{1}{\phi(1)}\left[\frac{-1}{\sqrt{2\pi}}\left(e^{-\frac{1}{2}}
  -e^{-\frac{x^2}{2}}\right)-x\left(\phi(1)-\phi(x)\right)\right], 0\leq x\leq \frac{1-e^{-\frac{1}{2}}}{\phi(1)\sqrt{2\pi}},\\
  \vspace{-0.4cm}\\
  \displaystyle\frac{1}{\phi(1)}\left[\frac{-1}{\sqrt{2\pi}}\left(1-
  e^{-\frac{x^2}{2}}\right)+x\phi(x)\right],  \frac{1-e^{-\frac{1}{2}}}{\phi(1)\sqrt{2\pi}}<x\leq 1.\end{array}
  \right. $$
 Here  $\phi$ is Laplace's function and $\phi(1)=0.3413$.

\bigskip
\noindent
 $\begin{array}{ll}
\textrm{\bf Ana Maria Acu}\\
\textrm{Lucian Blaga University of Sibiu} \\
 \textrm{Department of Mathematics and Informatics} \\
 \textrm{Str. Dr. I. Ratiu, No.5-7}  \\
 \textrm{RO-550012  Sibiu, Romania} \\
\textrm{e-mail: acuana77@yahoo.com}
\end{array} $

\bigskip
\noindent
 $\begin{array}{ll}
\textrm{\bf Heiner Gonska}\\
\textrm{University of Duisburg-Essen}\\
\textrm{Faculty of Mathematics}\\
%\textrm{Department}\\
\textrm{Forsthausweg 2}\\
\textrm{47057 Duisburg, Germany}\\
\textrm{e-mail: heiner.gonska@uni-due.de}
\end{array}$

\end{document}